\newtheorem{thm}{Theorem}[section]
\newtheorem{prop}[thm]{Proposition}
\newtheorem{cor}[thm]{Corollary}
\newtheorem{remark}[thm]{Remark}
\newtheorem{definition}[thm]{Definition}
\newtheorem{Proposition}[thm]{Proposition}
\newtheorem*{Satz*}{Satz}
\newtheorem{Lemma}[thm]{Lemma}
\newcommand{\mathset}[1]{{\left\{#1\right\}}}
\newcommand{\absolute}[1]{\left\lvert#1\right\rvert}
\DeclareMathOperator{\Spec}{Spec}
\DeclareMathOperator{\supp}{supp}
\title{Hearing the Serre invariant of a compact $p$-adic analytic manifold}
\author{Patrick Erik Bradley
and \'Angel Mor\'an Ledezma
\\
Karlsruhe Institute of Technology
\\
Institute of Photogrammetry and Remote Sensing 
\\
and 
\\
Geodetic Institute
\\
Englerstr.\ 7
\\
76131 Karlsruhe
\\
Germany
}
\date{\today}
\begin{document}

\maketitle

\begin{abstract}
Using a previous novel way of defining kernel functions for Laplacian integral operators on a compact $p$-adic analytic manifold $X$, one such operator $\Delta_0^s$ with $s\in\mathds{R}$ is applied to hearing the Serre invariant  $i(X)$ by showing that a wavelet eigenvalue is always congruent to $i(X)$ modulo $q-1$, where $q$ is the cardinality of the residue field $k$ attached to a $p$-adic number field $K$. It is shown how the number of $k$-rational points of the special fibre of the N\'eron model of an elliptic curve defined over $K$
relates to the wavelet spectrum 
of $\Delta_0^s$, and this then leads to the realisation that the Serre invariant $i(E(X))$ in the case of an elliptic curve $E$ with split multiplicative reduction vanishes modulo $q-1$.
\end{abstract}

\emph{Keywords:} $p$-adic analytic manifold, Serre invariant, Laplacian integral operator, wavelet, elliptic curves, N\'eron model

\section{Introduction}

The endeavour of enabling diffusion processes on $p$-adic domains other than $K^n$ or compact open subsets of $K^n$, where $K$ is a non-archimedean local field is undertaken since \cite{DiffMfp} by using an atlas on a compact $p$-adic analytic domain having a connected nerve complex. That work is inspired by the various methods employed in the case of local fields $K$\cite{Taibleson1975,VVZ1994,Kochubei},  the $n$-dimensional vector space $K^n$ \cite{RW2023}, 
finite-dimensional $K$-vector spaces \cite{PRSWY2024}, $p$-adic balls \cite{Kochubei2018,PW2024}, and finite disjoint  unions of $p$-adic discs \cite{ZunigaNetworks}. These works all use more or less some variation of the Vladimirov-Taibleson operator from \cite{Taibleson1975,VVZ1994}.
The idea for extending these to spaces which are locally an open piece of $K^n$ leads to the study of diffusion operators on Mumford curves \cite{brad_HeatMumf,brad_thetaDiffusionTateCurve,HearingGenusMumf}, also in such a way that topological and geometric properties of their underlying graphs can be extracted from the spectrum of this operators.
In \cite{BL_shapes_p}, the graph can be reconstructed from such a spectrum.
\newline

The new idea of \cite{DiffMfp} is to use the local structure of a $p$-adic analytic manifold $X$ coming from charts of an atlas on $X$. This is forced by the fact that e.g.\ the $K$-rational points of a projective algebraic variety form a compact subset of some projective $n$-space which can only locally be embedded into some affine space over $K$.
And if the underlying sets of the charts of a given atlas of $X$ form a covering such that the passage through overlaps of these sets is possible between any two points on the manifold, i.e.\ when the nerve complex associated with the atlas is connected, then, under some mild condition, a kernel function for a Laplacian integral operator can be defined which locally on small balls is a restricted Vladimirov-Taibleson operator. This allowed to extract the number of points of an elliptic curve with good reduction from the spectrum of such an operator \cite[Theorem 4.4]{DiffMfp}. 
A desideratum is the case of time-dependent diffusion on $p$-adic manifolds, extending the cases of \cite{NonAutonomous,Ledezma-Energy}, as well as the study of boundary problems as in \cite{ellipticBVP}.
\newline

The goal of this article is to use a slight modification of the operator $\Delta^s$ ($s\in\mathds{R}$) from \cite{DiffMfp} in order to obtain the Serre invariant of a compact $p$-adic manifold, and to apply this to  elliptic curves over $K$. Of particular interest here is  the manifold of $K$-rational points of a Tate elliptic curve defined over $K$.
These form a special class of elliptic curves with split multiplicative reduction. The Serre invariant $i(X)$ of a $p$-adic analytic manifold $X$ is given as the volume of $X$ with respect to the measure obtained from a nowhere vanishing differential $n$-form on $X$, where $n$ is the dimension of $X$. And the result by J.-P.\ Serre in \cite{Serre1965} is that this invariant $i(X)$ modulo $q-1$ represents a minimal way of writing $X$ as a disjoint union of $i(X)$ copies of the ball $O_K^n$, and all other ways of doing this are congruent $i(X)$ modulo $q-1$. Here, the number $q$ signifies the cardinality of the residue field $k$ associated with the local field $K$.
The significance of being able to  hear the Serre invariant is contained in the task of inferring structure from Laplacian diffusion or the spectra of Laplacian diffusion operators. In the case of compact $p$-adic analytic manifolds, this means that the space  looks like a $p$-adic ball with $i(X)+1$ holes modulo $q-1$. The main result of this article is that the wavelet eigenvalues for the operator $\Delta_0^s$ are all congruent to the Serre invariant $i(X)$ modulo $q-1$.
\newline

The following Section 2 introduces Serre's result about his invariant for compact $p$-adic manifolds $X$ and at the same time explains integration on these with the help of the measure $\mu_\omega$ obtained from nowhere vanishing differential $n$-form $\omega$, where $n$ is the dimension of $X$.  Section 3 introduces integral operators, as defined in \cite{DiffMfp}, using the distance function obtained by $\mu_\omega$ and an atlas having connected nerve complex in order to build kernel functions. The operator $\Delta_0^s$ considered here is different from $\Delta^s$ introduced in \cite{DiffMfp}. In this section, the eigenvalues associated with wavelets supported in $X$ are calculated. Section 4 is devoted to hearing the Serre invariant $i(X)$ of a compact $p$-adic analytic manifold from wavelet eigenvalues $\lambda_\psi\in\Spec(\Delta_0^s)$. The first result is
\[
i(X)\equiv \lambda_\psi\mod q-1\,,
\]
stated as Theorem 4.2. Using the theory of N\'eron models, this is used to hear the Serre invariant of an elliptic curve $E(K)$ over a sufficiently large $p$-adic number field, as 
\[
\mu_\omega(E(K))=(E(K):E_0(K))\frac{\absolute{\overline{E}_0(k)}}{q}\,,
\]
where $E_0(K)$ consists of the subgroup of points in $E(K)$ specialising to non-singular points in the reduction curve, $\overline{E}(k)$ is the identity component of the special fibre of the N\'eron model $\mathcal{E}$, and $q$ is the cardinality of the residue field $k$ of $K$. This is Proposition 4.4, which is used to prove the final result:
\[
\lambda_\psi\equiv 0\mod q-1
\]
in the case of a Tate elliptic curve $E(K)$ over $K$. This is Corollary 4.6.

\section{Compact $p$-adic analytic manifolds}

Let $K$ be a non-archimedean local field, i.e.\ $K$ is a locally compact field w.r.t.\ an absolute value coming from a discrete valuation. The residue field of $K$ is finite, and its number of elements is written as $q$, and is of the form
\[
q=p^f
\]
with $p$ a prime number and $f$ a positive natural number. In the case that $K=\mathds{Q}_p$, the field of $p$-adic numbers, it follows that $q=p$, i.e.\ $f=1$.  In the following, we will speak of $p$-adic analytic manifolds, instead of $K$-analytic ones, just like Serre himself did in \cite{Serre1965}.
\newline

An introduction to $p$-adic analytic manifolds can be found e.g.\ in \cite{Serre1992,WeilAAG,IgusaLocalZeta,Schneider2011}.
\newline

From \cite[Chapters 7 and 8]{Schneider2011}, learn that a $p$-adic chart for a Hausdorff topological space $M$ is a triple $(U,\phi,K^n)$ such that
$U\subseteq M$ is open, and $\phi\colon U\to K^n$ is a homeomorphism onto its image $\phi(U)$, and this image is open in $K^n$. Two charts $(U,\phi,K^n)$, $(V,\psi,K^m)$ are compatible, if the maps
\[
\xymatrix{
\phi(U\cap V)\ar@<.5ex>[r]^{\psi\circ\phi^{-1}}&\psi(U\cap V)\ar@<.5ex>[l]^{\phi\circ\psi^{-1}}
}
\]
are both locally analytic.
An atlas of $M$ is a family of compatible charts whose underlying sets form a cover of $M$. Two atlantes $\mathcal{A},\mathcal{B}$ of $M$ are equivalent, if $\mathcal{A}\cup\mathcal{B}$ is an atlas of $M$. An atlas $\mathcal{A}$ of $M$ is maximal, if any atlas equivalent with $\mathcal{A}$ is contained in $\mathcal{A}$. One observes that equivalence of atlantes is an  equivalence relation, and that each equivalence class of atlantes of $M$ contains a maximal atlas \cite[Remark 7.2]{Schneider2011}. A $p$-adic analytic manifold is a Hausdorff space $X$ equipped with a maximal atlas $\mathcal{A}$. If all charts in $\mathcal{A}$ are of equal dimension $n$, then $X$ is called a $p$-adic analytic $n$-manifold.
 \newline
 
Notice that here, the $p$-adic $n$-manifold $X$ is assumed to be compact and non-empty. 
Recall the following results by J.-P.\ Serre whose detailed proofs can be found in Igusa's book \cite[Chapter 7.5]{IgusaLocalZeta}, and named  Theorem \ref{SerreTheorem1} and Theorem \ref{Serre1}, below.

\begin{thm}[Serre]\label{SerreTheorem1}
The following statements hold true:
\begin{enumerate}
\item Every compact $p$-adic analytic $n$-manifold $X$ is isomorphic to the disjoint union $r.O_K^n$ of $r$ copies of $O_K^n$ for some integer $r\ge1$.
\item The manifolds $r.O_K^n$ and $r'.O_K^n$ are isomorphic, if and only if 
\[
r\equiv r'\mod q-1\,,
\]
where $q$ is the cardinality of the residue field of $K$.
\end{enumerate}
\end{thm}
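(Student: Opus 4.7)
The plan is to prove both parts by exploiting the ultrametric structure of $K^n$. For part (1), I would begin with compactness of $X$: each point $x\in X$ lies in the domain $U$ of some chart $(U,\phi,K^n)$, and inside $\phi(U)$ I can choose a compact open ball around $\phi(x)$ whose preimage under $\phi$ is clopen in $X$ and analytically isomorphic to $O_K^n$ (any ball $a+\pi^m O_K^n$ in $K^n$ is affinely isomorphic to $O_K^n$ via $x\mapsto (x-a)/\pi^m$). A finite subcover exists by compactness, and total disconnectedness of $X$ lets me refine this cover into a pairwise disjoint family of clopen sets, each of which further decomposes into finitely many balls isomorphic to $O_K^n$. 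This yields $X\cong r\cdot O_K^n$ for some integer $r\ge 1$.

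For the ``if'' direction of (2), the one-line identity is
\[
O_K^n \;=\; \bigsqcup_{a\in O_K/\pi O_K}\bigl((a+\pi O_K)\times O_K^{n-1}\bigr),
\]
where $\pi$ is a uniformiser of $K$ and each of the $q$ summands is analytically isomorphic to $O_K^n$ via the affine map $(x_1,\dots,x_n)\mapsto((x_1-a)/\pi,x_2,\dots,x_n)$. This shows $O_K^n\cong q\cdot O_K^n$, and iterating on individual components gives $r\cdot O_K^n\cong (r+k(q-1))\cdot O_K^n$ for every integer $k\ge 0$.

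For the harder ``only if'' direction I would construct an invariant $i(X)\in\mathds{Z}/(q-1)\mathds{Z}$ by setting $i(X):=r\bmod(q-1)$ for any decomposition $X\cong r\cdot O_K^n$ into clopen pieces analytically isomorphic to $O_K^n$. The goal is to prove well-definedness, from which $r\cdot O_K^n\cong r'\cdot O_K^n$ immediately forces $r\equiv r'\mod q-1$. Given two such decompositions, I would form a common refinement by intersecting their pieces and subdividing each intersection into genuine balls of $K^n$ (via the analytic isomorphism of either piece with $O_K^n$). The problem then reduces to the claim: any refinement of $O_K^n$ into $s$ pieces, each isomorphic to $O_K^n$, satisfies $s\equiv 1\mod q-1$. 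I would prove this by tree induction on the ball lattice of $O_K^n$: the trivial decomposition has $s=1$, and any one-step refinement replaces a single ball by its $q^n$ children, changing $s$ by $q^n-1=(q-1)(q^{n-1}+\cdots+1)\equiv 0\mod q-1$.

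The main obstacle will be the rigorous passage from a ``decomposition into pieces abstractly isomorphic to $O_K^n$'' to a genuine decomposition into balls of $K^n$, since the pieces need not be balls in any ambient coordinate chart. This amounts to showing that every clopen subset $C\subseteq O_K^n$ which is analytically isomorphic to $O_K^n$ admits a finite subdivision into balls of $K^n$, and that the counts obtained via different intermediate subdivisions all agree modulo $q-1$. Both facts follow from the tree structure of $p$-adic balls together with the induction above, but pinning down a coherent refinement calculus is the technically delicate part of the argument.
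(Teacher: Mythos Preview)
The paper does not actually prove this theorem: its entire proof is the single citation ``\cite[Th\'eor\`eme (1)]{Serre1965}'', with a pointer earlier in the text to Igusa's book for details. So there is no in-paper argument to compare against; what you have written is essentially the classical combinatorial proof due to Serre (and reproduced in Igusa), not an alternative to anything the paper does.

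Your sketch is sound. Part (1) and the ``if'' direction of (2) are exactly the standard arguments. For the ``only if'' direction, your tree induction is the right mechanism; note that it is cleaner to phrase it via common refinements of ball partitions: two balls in $K^n$ are nested or disjoint, so any two ball partitions of a given ball have a common ball refinement, and passing from a ball to a sub-partition of it changes the count by a multiple of $q^n-1$ by induction on depth. The obstacle you single out --- that a clopen piece abstractly isomorphic to $O_K^n$ need not be a ball in any ambient chart --- is genuine and is resolved exactly as you indicate: an analytic isomorphism $\phi\colon O_K^n\to C\subseteq K^n$ takes sufficiently small balls to balls (ultrametric inverse function theorem), so one refines both sides until $\phi$ matches ball partitions, and then the invariance of the count modulo $q-1$ on each side forces the number of balls in any decomposition of $C$ to be $\equiv 1$. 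That is the ``refinement calculus'' you allude to, and it is indeed the only place where care is needed.
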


\begin{proof}
\cite[Th\'eor\`eme (1)]{Serre1965}.
\end{proof}

The residue class $r\mod q-1$ associated with a compact $p$-adic analytic manifold $X$ is called the \emph{Serre invariant} of $X$, and is denoted as $i(X)$.
\newline

Denote the Haar measure on $K^n$ as 
\[
\absolute{dx}=\absolute{dx_1}\wedge\cdots\wedge \absolute{dx_n}
\]
with coordinate Haar measures $\absolute{dx_i}$,
and normalise it by setting
\[
\int_{O_K}\absolute{dx_i}=1
\]
for $i=1,\dots,n$. Given an analytic differential $n$-form $\omega$ on an analytic $n$-manifold $X$, it can locally on a chart $\phi\colon U\to K^n$ be written in a sloppy way as
\begin{align}\label{sloppyDifferential}
\omega|_U:=\phi_*\omega=f_U\,dx_1\wedge\dots\wedge dx_n=f_U\,dx
\end{align}
for some analytic function $f_U\colon \phi(U)\to K$. More or less sloppy notations for (\ref{sloppyDifferential}) are these:
\begin{align*}
\omega|_{U}(x)&=&\,\text{[sloppy notation]}
\\
&=\phi_*\omega(x)&\text{[good notation]}
\\
&=\omega_U(\phi(x))&\text{[meaningful notation]}
\\
&=f_U(\phi(x))\,dx_1\wedge\dots\wedge dx_n&\text{[better understandable notation]}
\\
&=f_U(\phi(x))\,dx&\text{[understandable shorthand notation]}
\\
&=f_U(x)\,dx&\text{[other sloppy notation]}\,,
\end{align*}
where one should notice that $\omega_U$ is the differential form on $\phi(U)\subseteq K^n$ locally representing $\omega$ on the chart $(U,\phi)$. That is the forerunner to the idea of a sheaf in action!
\newline

This now yields a measure
\begin{align}\label{localMeasure}
\absolute{\omega|_U}(A):=\int_{\phi(A)}\absolute{f_U(x)}\absolute{dx}
\end{align}
for $A\subset U$ such that $\phi(A)\subset K^n$ is measurable w.r.t.\ $\absolute{dx}$.
Notice that in (\ref{localMeasure}), the expression $f_U(x)$ is less sloppy than one would believe from the above, because now the variable $x$ runs through $\phi(A)$. But of course, one can sloppify the integral to
\[
\int_A\absolute{f_U(x)}\absolute{dx}\,,
\]
if one is not able to refrain from the urge to do so.
\newline

Since the locally defined forms $\omega|_U$ (on $\phi(U)$!) ``glue'' together to  the given differential $n$-form $\omega$ on $X$,  one obtains a  measure $\absolute{\omega}$ on $X$ minus the vanishing set of $\omega$,
 locally on a chart 
 given by (\ref{localMeasure}).
This construction can be found in \cite[Chapter 2.2]{WeilAAG} and \cite[Chapter 7.4]{IgusaLocalZeta}. 

\begin{thm}[Serre]\label{Serre1} 
Let $X$ be a compact $p$-adic analytic $n$-manifold. Then 
\begin{enumerate}
\item There exists a nowhere vanishing analytic differential $n$-form on $X$.
\item If $\omega$ is a nowhere vanishing analytic differential $n$-form on $X$, then 
\[
i(X)\equiv a\equiv \int_X\absolute{\omega(x)}\mod q-1\,,
\]
where the identity
\[
\int_X\absolute{\omega}=\frac{a}{q^b}
\]
holds true for some $a,b\in\mathds{N}$.
\end{enumerate}
\end{thm}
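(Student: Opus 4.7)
The plan is to reduce everything to a computation on $O_K^n$ via Theorem \ref{SerreTheorem1} and then to exploit the fact that $q\equiv 1\pmod{q-1}$.

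Part (1) is essentially immediate from Theorem \ref{SerreTheorem1}: an isomorphism $X\cong r\cdot O_K^n$ of $p$-adic analytic manifolds allows one to take the standard form $dx_1\wedge\cdots\wedge dx_n$ on each $O_K^n$-summand, which is nowhere vanishing and which glues trivially (the summands being disjoint and clopen) to an analytic $n$-form on $X$.

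For part (2), the first step is to observe that $\absolute{\omega}$ is locally constant on $X$: in any chart $(U,\phi)$ one has $\omega|_U = f_U\,dx$ with $f_U$ analytic and nowhere vanishing, hence $\absolute{f_U}$ is continuous into the discrete set $q^{\mathds{Z}} = \absolute{K^*}$. Using the compactness of $X$ and refining a chosen atlas, I would then produce a finite decomposition
\[
X = \bigsqcup_{j=1}^{N} B_j
\]
into pairwise disjoint clopens $B_j$, each analytically isomorphic (via translation and scaling of the chart image) to $O_K^n$, and on each of which $\absolute{\omega}$ takes a single value $q^{-c_j}$; denote by $q^{-b_j}$ the Haar volume of the chart image $\phi_j(B_j)\subset K^n$. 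Then
\[
\int_X \absolute{\omega} \;=\; \sum_{j=1}^N q^{-c_j-b_j} \;=\; \frac{a}{q^b}
\]
with $b := \max\bigl(0,\max_j(c_j+b_j)\bigr)$ and $a := \sum_{j=1}^N q^{b-c_j-b_j}\in\mathds{N}$, every exponent being non-negative by the choice of $b$.

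Next I would apply Theorem \ref{SerreTheorem1}(2) to the decomposition $X\cong N\cdot O_K^n$ just constructed to obtain $N\equiv i(X)\pmod{q-1}$. Since $q\equiv 1\pmod{q-1}$, every summand $q^{b-c_j-b_j}$ is congruent to $1$ modulo $q-1$, and thus
\[
a \;\equiv\; N \;\equiv\; i(X) \pmod{q-1}.
\]
The main technical step—and the one I would be most careful about—is producing the finite decomposition of $X$ into balls that is simultaneously a disjoint union of clopen copies of $O_K^n$ and on each of which $\absolute{\omega}$ is constant; once this partition is in place, the remainder is bookkeeping driven by the congruence $q\equiv 1\pmod{q-1}$.
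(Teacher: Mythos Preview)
Your argument is correct and is in fact the standard proof (essentially the one in Igusa's book, \cite[Chapter~7.5]{IgusaLocalZeta}, to which the paper also points). The paper itself does not give a proof of this statement at all: its ``proof'' consists solely of the citation \cite[Th\'eor\`eme~(2)]{Serre1965}. So there is nothing to compare at the level of argument; you have supplied what the paper outsources.

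Two minor remarks on your write-up. First, the representation $\int_X\absolute{\omega}=a/q^b$ with $a,b\in\mathds{N}$ is of course not unique, but since $q\equiv 1\pmod{q-1}$ any two such $a$ are congruent modulo $q-1$, so the congruence is well posed regardless; you might say this explicitly. Second, the step you flag as ``most careful''---refining to a finite \emph{disjoint} cover by balls on which $\absolute{\omega}$ is constant---is unproblematic in the ultrametric setting: by local constancy of $\absolute{f_U}$ and compactness you get a finite cover by small balls, and two ultrametric balls are either nested or disjoint, so discarding the smaller of any nested pair yields a partition. Each such ball, being $a+\pi^m O_K^n$ inside a chart, is bianalytically isomorphic to $O_K^n$ via an affine map. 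With these two sentences added, your sketch is complete.
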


\begin{proof}
\cite[Th\'eor\`eme (2)]{Serre1965}.
\end{proof}

An immediate consequence of Theorem \ref{Serre1} is that a compact $p$-adic analytic manifold $X$ always has an everywhere defined measure. Write this measure also as
\[
\mu_\omega(A)=\int_A\absolute{\omega}
=\int_X 1_A\,\absolute{\omega}
\]
for $A\subset X$ measurable w.r.t.\ to the measure $\absolute{\omega}$ given by a nowhere vanishing differential $n$-form $\omega$ on  $X$, and using the indicator function $1_A$ of $A$.

\section{Operators on compact $p$-adic analytic manifolds}

The operators on a $p$-adic analytic manifold $X$ constructed in \cite{DiffMfp} use a certain kind of atlas $\mathcal{A}$ on $X$, for which the nerve complex $N(\mathcal{A})$ is connected, and the so-called \emph{equalising property} holds true.
We will briefly explain these notions here.
\newline

The first thing to remark is that an analytic structure, i.e.\ an equivalence class of atlantes, does in general not produce a unique metric on a manifold. This is well-known in the case of Riemannian manifolds. Another issue is given by 
Theorem \ref{SerreTheorem1}: namely, that any sufficiently fine atlas is a collection of disjoint charts. This makes it difficult to define a transition rate function in a meaningful way. However, an atlas with sufficient overlaps produces a simplicial structure with the connectedness property---the nerve complex $N(\mathcal{A})$ produced from the open sets in the charts covering $X$. It is assumed that distinct charts $(U,\phi_U),(V,\phi_V)$ always have distinct open sets $U\neq V$. The nerve complex is defined by saying that its $k$-simplices are given by the non-empty intersection of
 $k$ open sets appearing in charts of $\mathcal{A}$. The first requirement is that $N(\mathcal{A})$ be a connected simplicial complex.
The next observation is that the transition maps
\[
\tau_{\phi_U,\phi_V}\colon\phi_U(U\cap V)\to\phi_V(U\cap V)
\]
on intersections $U\cap V\neq\emptyset$ for charts $(U,\phi_U),(V,\phi_V)\in\mathcal{A}$ are bi-analytic, and any finite atlas on a compact $p$-adic analytic manifold can be replaced by one such that all the sets occurring in its charts are the same, but the chart maps are such that $\tau_{\phi_U,\phi_V}$ takes sufficiently small balls to balls of equal radius. This is the equalising property of an atlas. Any compact $p$-adic analytic manifold has a finite equalising atlas, cf.\ \cite[Theorem 2.6]{DiffMfp}.
\newline

In this way, the notion of a ball on a compact $p$-adic analytic manifold, and also its radius, becomes well-defined. However, the radius itself is not used, but it is the measure of a ball which is of interest. So, if two points $x,y\in X$ lie in a common ball, the join
\[
x\wedge y=\bigcap\limits{B\ni x,y}B\,,
\]
where $B$ runs through all balls of $X$ containing $x$ and $y$, can be defined, as well as the simplex 
\[
\sigma(x)=\text{the highest-dimensional simplex in $N(\mathcal{A})$ containing $x\in X$}
\]
can be defined.
From this, the distance $d_g(x,y)$ for $x,y\in X$ is given as
\[
d_g(x,y)=\begin{cases}
\mu_\omega(x\wedge y),&\exists\,x\wedge y
\\
\min\limits_{\gamma\colon\sigma(x)\leadsto\sigma(y)}\mu_X(U_\gamma(x,y)),&\nexists\,x\wedge y\,,
\end{cases}
\]
where $\gamma$ runs through all simplicial paths in $N(\mathcal{A})$  between $x$ and $y$, and 
\[
U_\gamma=\bigcup\limits_{\sigma\in\gamma}\sigma\subset X
\]
is the union of the sets associated with the simplices $\sigma$ of $N(\mathcal{A})$ along the path $\gamma$. The distance $d_g$ is called the \emph{$p$-adic geodetic distance} on $X$. It was used in \cite{DiffMfp} to construct the kernel function of an operator called \emph{$p$-adic Laplace-Beltrami} operator $\Delta^s$ on the function space $\mathcal{D}(X)$ of locally constant functions $X\to\mathds{C}$. Its definition is
\[
\Delta^sf(x)=\int_X d_g(x,y)^{-s}(f(x)-f(y))\,d\mu_\omega(y)
\]
with $s\in\mathds{R}$. Since the nerve complex $N(\mathcal{A})$ is connected, the distance function $d_g$ can now be used in order to interpret $d_g(x,y)^{-s}$ as a transition rate for jumping between two points $x,y\in X$.
\newline 

For the purpose of this article define the following kernel function 
\[
k_0\colon X\times X,\;
(x,y)\mapsto 
\begin{cases}
\mu_\omega(x\wedge y)^{-s},& \exists\,x\wedge y\in X
\\
1,&\text{otherwise}
\end{cases}
\]
on the manifold $X\times X$. 
The corresponding integral operator
on the space $\mathcal{D}(X)$ of locally constant functions $X\to\mathds{C}$: 
\[
\Delta_0^\alpha u(x)
=\int_X k_0(x,y)(u(x)-u(y))\,d\mu_\omega(y)
\]
for  $u\in\mathcal{D}(X)$
is called  a \emph{Vladimirov-Z\'u\~niga} operator.
Observe that when restricted to the space $\mathcal{D}(B)$ for $B\subset X$ a ball, the operators $\Delta^s,\Delta_0^s$ coincide with the Taibleson operator as defined in \cite{Taibleson1975} which in turn is a generalisation of the Vladimirov operator from \cite{VVZ1994}.
\newline

A \emph{Koyzrev wavelet} on $K^n$ is a locally constant function of the form
\[
\psi_{B(a),j}(x)=\mu(B(a))^{-\frac12}\chi\left(\pi^{d-1}\eta(j)x\right)1_{B(a)}(x)\,,
\]
where $\mu$ denotes the product Haar measure on $K^n$,
$a\in K^n$, $B(a)\subset K^n$ is a ball centred in $a$, $j\in \left[\left(O_K/(\pi)O_K\right)^\times\right]^n$, 
\[
\eta\colon \left(O_K/\pi O_K\right)^n\to K
\]
a lift of the canonical projection $O_K\to O_K/\pi O_K$, where $\pi$ is the uniformiser of $K$, and 
\[
\chi\colon K^n\to S^1
\]
a fixed complex-valued unitary additive character of $K$. 
In \cite[Definition 3.7]{DiffMfp}, the notion of wavelet is extended to a $p$-adic analytic manifold $X$ by a function supported on a ball $B(a)\subset X$ centred in $a\in X$ and requiring it to be a Kozyrev wavelet supported on the ball $\phi_U(B(a))\subset K^n$ centred in $\phi_U(a)$ for any chart $(U,\phi_U)\in\mathcal{A}$ whose set $U$ contains $B(a)$.
\newline

\begin{remark}
In contrast to the case of $K^n$, it can happen in general $p$-adic manifolds that distinct balls have a genuine overlap, i.e.\ they are neither disjoint nor one contained in the other. For example, if $X$ is the projective line, then it is the union of the unit disc with a copy of itself given by the map $z\mapsto z^{-1}$, and their intersection is the unit sphere.
\end{remark}

\begin{prop}\label{Spectrum}
The 
 wavelets $\psi$ of $X$ are eigenfunctions of $\Delta_0^s$
with eigenvalue
\[
\lambda_{\psi}=
\int_{X\setminus U(x)}\,d\mu_\omega+
\int_{U(x)\setminus B}
\mu_\omega(x\wedge y)^{-s}\,d\mu_\omega(y)
+\mu_\omega(B)^{1-s}
\]
with $x\in\supp(\psi)=B\subseteq U(x)\subseteq X$, where $U(x)$ is the largest ball in $X$ containing $x\in X$, and with $s\in\mathds{R}$. 
\end{prop}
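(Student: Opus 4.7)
Fix $x\in B=\supp(\psi)$. I would evaluate
\[
\Delta_0^s\psi(x)=\int_X k_0(x,y)(\psi(x)-\psi(y))\,d\mu_\omega(y)
\]
by splitting the integral along the disjoint decomposition $X=(X\setminus U(x))\sqcup(U(x)\setminus B)\sqcup B$. First I would observe that $U(x)$ is independent of the choice of $x\in B$: two non-archimedean balls are either nested or disjoint, so two maximal balls containing $B$ must coincide.

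On $X\setminus U(x)$ the join $x\wedge y$ cannot exist (by maximality of $U(x)$) and $\psi(y)=0$, so the kernel equals $1$ and this piece contributes $\psi(x)\,\mu_\omega(X\setminus U(x))$. On $U(x)\setminus B$, where $x\wedge y$ does exist and $\psi(y)=0$, the contribution is $\psi(x)\int_{U(x)\setminus B}\mu_\omega(x\wedge y)^{-s}\,d\mu_\omega(y)$; the integrand is independent of $x\in B$ because for $y\notin B$ the smallest ball containing $x$ and $y$ coincides with the smallest ball containing $B$ and $y$.

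The remaining piece $\int_B\mu_\omega(x\wedge y)^{-s}(\psi(x)-\psi(y))\,d\mu_\omega(y)$ I would handle via a chart $(U_0,\phi)\in\mathcal{A}$ containing $B$. By definition of a wavelet on $X$, $\phi_*\psi$ is a Kozyrev wavelet on $\phi(B)\subseteq K^n$, and the equalising property of $\mathcal{A}$ together with the local constancy of the chart density $|f_{U_0}|$ on sufficiently small balls should make both the kernel $\mu_\omega(x\wedge y)^{-s}$ and the measure $d\mu_\omega$ transform compatibly, reducing the restricted operator to a Vladimirov--Taibleson-type operator on $\phi(B)$. The standard Kozyrev spectral calculation---using the mean-zero property of the wavelet and the vanishing of character integrals over annuli of the appropriate scale---should then produce eigenvalue $\mu_\omega(B)^{1-s}$, giving the final contribution $\mu_\omega(B)^{1-s}\psi(x)$. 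Adding the three contributions yields the stated formula. To round things off, for $x\notin B$ the eigenvalue equation $\Delta_0^s\psi(x)=0$ follows because $k_0(x,\cdot)$ is constant on $B$ (again by the nested-or-disjoint property of non-archimedean balls), reducing the integral to a multiple of $\int_B\psi\,d\mu_\omega=0$.

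The hard part will be the third-piece calculation: I must verify that, after pulling through the chart, the density factor coming from $\omega$ cancels consistently between the kernel $\mu_\omega(x\wedge y)^{-s}$ and the measure $d\mu_\omega$ so that the classical Kozyrev--Taibleson eigenvalue translates back to $X$ precisely as $\mu_\omega(B)^{1-s}$. This is exactly where the equalising property of $\mathcal{A}$ and the local structure of $\mu_\omega$ developed in \cite{DiffMfp} should do the work.
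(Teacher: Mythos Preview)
Your approach is essentially the paper's own: the paper simply cites the vanishing integral $\int_X\psi\,d\mu_\omega=0$ from \cite[Lemma~3.8]{DiffMfp} and then invokes Kozyrev's eigenvalue computation \cite[Theorem~3]{Kozyrev2004}, whereas you unpack that computation explicitly via the three-region decomposition $X=(X\setminus U(x))\sqcup(U(x)\setminus B)\sqcup B$ together with the mean-zero argument for $x\notin B$. One caution: you justify the independence of $U(x)$ from $x\in B$ and the constancy of $k_0(x,\cdot)$ on $B$ by appealing to the ``nested or disjoint'' dichotomy for non-archimedean balls, but the paper's own Remark preceding this proposition warns that on a general $p$-adic manifold distinct balls \emph{can} genuinely overlap (e.g.\ on $\mathbb{P}^1$); the argument still goes through because the relevant balls here lie inside a common chart where the ultrametric dichotomy does hold, but you should say so rather than invoke it as a global fact about~$X$.
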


\begin{proof}
The important vanishing of the oscillatory integral 
\[
\int_X\psi\,\mu_\omega=0\,,
\]
is shown in \cite[Lemma 3.8]{DiffMfp}.
The eigenvalue calculation of
\cite[Theorem 3]{Kozyrev2004} can now be applied directly to this case and yields the asserted value for $\lambda_\psi$.
\end{proof}

\begin{remark}
In \cite[Lemma 4.9]{SchottkyInvariantDiffusion}, a similar eigenvalue calculation based on the method of \cite[Theorem 3]{Kozyrev2004} was used for a measure coming from an algebraic differential $1$-form on the $p$-adic points of a Mumford curve. 
\end{remark}

\begin{remark}
The operators $\Delta^s,\Delta_0^s$  can be viewed as  Parisi-Z\'u\~niga-type operators on a finite complete graph, similar to the operators in \cite{IndexTopo_p}.
\end{remark}

\section{Hearing the Serre number}

Let $X$ be a compact $p$-adic manifold with  a finite equitising atlas $\mathcal{A}$, such that the nerve complex $N(\mathcal{A})$ is connected, as before.

\subsection{The Serre Invariant of  compact $p$-adic analytic manifolds}

First, calculate the Serre invariant of a sphere  on $K^n$.

\begin{Lemma}\label{Sphere}
The Serre invariant of a sphere in $K^n$ vanishes modulo $q-1$.
\end{Lemma}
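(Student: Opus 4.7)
The plan is to realise the sphere as a disjoint union of balls isomorphic to $O_K^n$ and then invoke Theorem~\ref{SerreTheorem1}. A sphere $S=\{x\in K^n:\absolute{x-a}=r\}$ with $r$ in the value group $\absolute{K^\times}$ is open, closed and compact in $K^n$ by the usual non-archimedean argument that every point of $S$ is an interior point (for $y\in S$ and $\absolute{z-y}<r$ we have $\absolute{z-a}=\max(\absolute{z-y},\absolute{y-a})=r$), so $S$ itself is a compact $p$-adic analytic $n$-manifold.

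The central step is the decomposition $S=B(a,r)\setminus B(a,r^-)$, where $B(a,r^-)=\{x:\absolute{x-a}<r\}$. Writing $r=q^{-k}$, the affine change of coordinates $x\mapsto (x-a)/\pi^k$ identifies $B(a,r)$ with $O_K^n$ and $B(a,r^-)$ with $\pi O_K^n$; the coset decomposition $O_K^n=\bigsqcup_{\bar j\in (O_K/\pi O_K)^n}(j+\pi O_K^n)$ then exhibits $B(a,r)$ as a disjoint union of $q^n$ translates of $B(a,r^-)$. Removing the single coset equal to $B(a,r^-)$ itself presents $S$ as a disjoint union of exactly $q^n-1$ balls, each $p$-adically isomorphic to $O_K^n$ via a further affine rescaling.

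Applying Theorem~\ref{SerreTheorem1}(2) immediately yields $i(S)\equiv q^n-1\mod q-1$, and the factorisation $q^n-1=(q-1)(q^{n-1}+q^{n-2}+\cdots+q+1)$ gives the claim. There is essentially no serious obstacle here: the only things to check are that the chart-level affine rescalings really preserve the $p$-adic analytic structure (immediate) and that $r$ lies in $\absolute{K^\times}$, since otherwise $S$ is empty and the statement is trivially true. As a sanity check, Theorem~\ref{Serre1}(2) applied with $\omega=dx_1\wedge\cdots\wedge dx_n$ produces $\int_S\absolute{\omega}=q^{-kn}-q^{-(k+1)n}=(q^n-1)/q^{(k+1)n}$, so $a=q^n-1$ again vanishes modulo $q-1$.
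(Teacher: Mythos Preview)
Your proof is correct and follows essentially the same approach as the paper: realise the sphere as a ball minus its maximal subball, hence as a disjoint union of $q^n-1$ copies of $O_K^n$, and factor $q^n-1=(q-1)\sum_{k=0}^{n-1}q^k$. The paper's version is simply terser, omitting the explicit coset decomposition and the measure-theoretic sanity check you provide.
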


\begin{proof}
A sphere $S\subset K^n$ is a $p$-adic $n$-ball minus a proper maximal subball. Hence,
\[
i(S)\equiv q^n-1\equiv
(q-1)\sum\limits_{k=0}^{n-1}q^k\equiv 0\mod q-1
\]
as asserted.
\end{proof}

Using the operator $\Delta_0^s$ with $s\in\mathds{R}$ allows now to hear its Serre invariant of a compact $p$-adic analytic manifold $X$:

\begin{thm}\label{SerreInvariant}
Let $X$ be a compact $p$-adic analytic manifold,  $s\in\mathds{R}$, and let $\psi$ be a Kozyrev wavelet  supported in $X$. Then the corresponding eigenvalue $\lambda_\psi$ of $\Delta_0^s$ satisfies
\[
\lambda_\psi\equiv i(X)\mod q-1\,,
\]
i.e.\ the Serre invariant $i(X)$ of the  $(X,\mathcal{A})$ can be read off the spectrum of $\Delta^s$.
\end{thm}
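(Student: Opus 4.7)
The plan is to show that $\lambda_\psi \equiv \mu_\omega(X) \mod q-1$ and then invoke Serre's Theorem~\ref{Serre1} to conclude $\mu_\omega(X) \equiv i(X) \mod q-1$. The key manoeuvre is a shell decomposition that exposes hidden factors of $q-1$ in the eigenvalue formula of Proposition~\ref{Spectrum}.

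First, I would decompose the annular integral using the chain of balls through $x$: with $B = B_0 \subsetneq B_1 \subsetneq \cdots \subsetneq B_N = U(x)$ the strictly increasing chain of balls containing $x$ in $U(x)$, one has $U(x) \setminus B = \bigsqcup_{i=1}^N S_i$ with $S_i = B_i \setminus B_{i-1}$, and $x \wedge y = B_i$ for each $y \in S_i$, so
\[
\int_{U(x) \setminus B} \mu_\omega(x \wedge y)^{-s}\, d\mu_\omega(y) = \sum_{i=1}^N \mu_\omega(B_i)^{-s}\mu_\omega(S_i).
\]
After refining the equalising atlas so that $|f|$ is constant on each relevant ball, each shell $S_i$ is locally a sphere in $K^n$. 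Lemma~\ref{Sphere} together with Theorem~\ref{Serre1} then gives $\mu_\omega(S_i) \equiv 0 \mod q-1$, and likewise $\mu_\omega(B) = q^\kappa$ for some $\kappa \in \mathds{Z}$.

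Substituting $\mu_\omega(X \setminus U(x)) = \mu_\omega(X) - \mu_\omega(B) - \sum_i \mu_\omega(S_i)$ into Proposition~\ref{Spectrum} rearranges the spectral formula into
\[
\lambda_\psi = \mu_\omega(X) + \bigl[\mu_\omega(B)^{1-s} - \mu_\omega(B)\bigr] + \sum_{i=1}^N \bigl[\mu_\omega(B_i)^{-s} - 1\bigr]\, \mu_\omega(S_i).
\]
The shell-sum correction vanishes modulo $q-1$ because each $\mu_\omega(S_i)$ does. The boundary correction factors as $q^\kappa(q^{-s\kappa}-1)$, which vanishes under the reduction $q\equiv 1$. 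Combining these steps yields $\lambda_\psi \equiv \mu_\omega(X) \equiv i(X) \mod q-1$.

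The hard part will be making rigorous sense of ``$\equiv 0 \mod q-1$'' in the presence of the irrational factors $\mu_\omega(B_i)^{-s} = q^{-s\kappa_i}$: for generic $s \in \mathds{R}$ these are neither rational nor $p$-adically integral, so the congruence must be read formally in $\mathds{Z}[q^{\pm 1},q^{\pm s}]/(q-1)$, in which the reduction $q\equiv 1$ already forces $q^{cs}\equiv 1$ for every $c\in\mathds{Z}$. Reconciling this formal interpretation with the integer-valued congruence delivered by Serre's theorem is the most delicate point of the argument.
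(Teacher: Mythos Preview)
Your proof is correct and follows essentially the same route as the paper: both start from the eigenvalue formula of Proposition~\ref{Spectrum}, decompose the annulus $U(x)\setminus B$ into concentric spheres, invoke Lemma~\ref{Sphere} to kill the shell contributions modulo $q-1$, and use that a ball has measure a power of $q$, hence $\equiv 1$. Your explicit algebraic rearrangement isolating the correction terms, and your honest flagging of the need for a formal reading of ``$\equiv\mod q-1$'' in the presence of the irrational powers $q^{-s\kappa}$, are if anything more careful than the paper's terser computation, which leaves that issue implicit.
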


\begin{proof}
Let $x\in B=\supp(\psi)\subseteq U(x)\subseteq X$, where $U(x)$ is the largest ball in $X$ containing $x\in X$.
According to Proposition \ref{Spectrum}, it holds true that
\begin{align*}
\lambda_\psi&\equiv 
i(X)-i(U(x)) + \int_{U(x)\setminus B}\mu_\omega(x\wedge y)^{-s}\,d\mu_\omega(y)
+i(B)^{1-s} 
\\
&\equiv i(X)- i(U(x))
+\int_{U(x)\setminus B}i(x\wedge y)^{-s}\,d\mu_\omega(y) +1  
\\
&\equiv i(X)-i(U(x))+
\sum\limits_{k=r}^t
i(S_k(x))^{1-s}+1
\\
&\equiv i(X)-i(U(x))
+0+1
\\
&\equiv i(X)
\mod q-1\,,
\end{align*}
where $S_k(x)$ is a sphere of radius $q^{-k}$ centred in $x\in X$, with $k=r,\dots,t\in\mathds{Z}$, such that these form a disjoint covering of the annulus $U(x)\setminus B$, and
because $i(U(x))\equiv i(B)\equiv1\mod q-1$.
This proves the assertion.
\end{proof}



\subsection{The Serre invariant of an  elliptic curve}

In order to compute the Serre invariant of the $K$-rational points $E(K)$ of an elliptic curve $E$ defined over the local field $K$, some facts about N\'eron models are needed. These can be found e.g.\ in \cite[Chapter IV]{Silverman1994}. 
The difference to the article \cite{DiffMfp} is that here, the operator $\Delta_0^s$ will be used, whereas it is the operator $\Delta^s$ in \emph{loc.\ cit.}
\newline

Recall from \cite[Chapter IV.5]{Silverman1994}:
\begin{definition}
A \emph{N\'eron} model for $E/K$ is a (smooth) group scheme $\mathcal{E}/O_K$ whose generic fibre is $E/K$, and which satisfies the universal property: Let $\mathcal{X}/R$ be a smooth $O_K$-scheme with generic fibre $X/K$, and let $\phi_K\colon X\to E$ be a rational map defined over $K$. Then there exists a unique $O_K$-morphism $\phi_{O_K}\colon \mathcal{X}\to\mathcal{E}$ extending $\phi_K$.
\end{definition}

The universal property ensures that there is an injective map
\[
\iota\colon\mathcal{E}(O_K)\to E(K)\,,
\]
and, after a sufficiently large extension of $K$, it may and will be assumed that $\iota$ is an isomorphism, i.e.\ that
\[
\mathcal{E}(O_K)=E(K)
\]
holds true. In this case, there is a reduction map
\[
\rho\colon E(K)\to\overline{E}(k)\,,
\]
where $\overline{E}$ is the special fibre of $\mathcal{E}$ defined over the residue field $k=O_K/\pi O_K$ of $K$, where $\pi\in O_K$ is the uniformiser of $K$.
\newline

Let $\omega\in \Omega^1(E/K)$ be a nowhere vanishing differential $1$-form on $E(K)$. Using the fact that $E$ is projective algebraic curve of genus $1$, allows to take an algebraic differential $1$-form for $\omega$, as they are non-vanishing. This then allows to produce  a measure 
\[
\mu_E=\absolute{\omega}
\]
on $E(K)$ from $\omega$, cf.\ \cite[Chapter 7.4]{IgusaLocalZeta}.
\newline

Define
\[
E_0(K)=\mathset{P\in E(K)\mid \begin{minipage}{8cm}$P$ reduces mod $\pi$ in the Weierstrass equation for $E$ to a non-singular point
\end{minipage}}\,,
\]
and obtain the following result:

\begin{Proposition}\label{measureEllipticCurve}
It holds true that
\[
\mu_E(E(K))=(E(K):E_0(K))\cdot\frac{\absolute{\overline{E}_0(k)}}{q}
\]
for $K$ a sufficiently large $p$-adic number field, where $\overline{E}_0$ is the (non-singular part of the) identity component of the special fibre $\overline{E}$ over $k$.
\end{Proposition}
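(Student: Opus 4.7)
The plan is to exploit the filtration of $E(K)$ by the open subgroups $E_1(K)\subseteq E_0(K)\subseteq E(K)$, together with the fact that the measure $\mu_E$ coming from an invariant algebraic differential $\omega$ on the elliptic curve is translation-invariant under the group law on $E(K)$. Here $E_1(K)=\ker\rho$ denotes the kernel of reduction and $E_0(K)=\rho^{-1}(\overline{E}_0(k))$. First I would partition $E(K)$ into cosets of $E_0(K)$; translation-invariance of $\mu_E$ then yields
\[
\mu_E(E(K))=(E(K):E_0(K))\cdot\mu_E(E_0(K)).
\]
Since smooth points lift by Hensel's lemma, the reduction map $\rho\colon E_0(K)\to\overline{E}_0(k)$ is a surjective group homomorphism with kernel $E_1(K)$, so $E_0(K)$ decomposes into $\absolute{\overline{E}_0(k)}$ translates of $E_1(K)$, giving
\[
\mu_E(E_0(K))=\absolute{\overline{E}_0(k)}\cdot\mu_E(E_1(K)).
\]

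It then remains to show $\mu_E(E_1(K))=q^{-1}$. For this I would invoke the formal group $\hat{E}$ attached to $E$: the formal logarithm provides a topological group isomorphism $\hat{E}(\pi O_K)\cong E_1(K)$, and the invariant differential $\omega$, when chosen to be a N\'eron differential, pulls back under this identification to $dt$ on $\pi O_K$. Consequently $\mu_E(E_1(K))$ equals the Haar measure of the maximal ideal $\pi O_K$ with the normalization $\int_{O_K}\absolute{dx}=1$, namely $q^{-1}$. Chaining the three identities yields the stated formula.

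The main obstacle is this last normalization: one must ensure that the chosen $\omega$ is in fact a N\'eron differential, so that its pullback via a formal parameter is exactly $dt$ rather than $c\cdot dt$ for some $c\in K^\times$. This is precisely where the hypothesis ``$K$ sufficiently large'' enters, since the canonical invariant differential on a minimal Weierstrass model is N\'eron, and after a suitable finite extension of $K$ one can always arrange the initially given $\omega$ to differ from this canonical choice by a unit of $O_K$, so that the measure-theoretic identification $\mu_E(E_1(K))=q^{-1}$ becomes exact rather than merely true up to a power of $q$.
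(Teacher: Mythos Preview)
Your argument is correct and essentially unpacks, in the special case of an elliptic curve, the content of the general result the paper invokes. The paper's proof is much terser: it observes that $E(K)/E_0(K)$ is finite (citing \cite[Corollary 9.2(d)]{Silverman1994}) and then appeals directly to Weil's volume formula \cite[Theorem 2.2.5]{WeilAAG}, which gives $\mu_E(E_0(K))=\absolute{\overline{E}_0(k)}/q$ in one stroke. You instead carry out the computation by hand via the filtration $E_1(K)\subseteq E_0(K)\subseteq E(K)$ and the formal group, which is precisely how one would prove Weil's formula in this setting. What your route buys is transparency about the normalisation: you make explicit that the equality $\mu_E(E_1(K))=q^{-1}$ hinges on $\omega$ being a N\'eron differential (or a unit multiple thereof), a point the paper leaves implicit in its citation of Weil. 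One small correction to your closing paragraph: the hypothesis ``$K$ sufficiently large'' in the paper is not really there to adjust $\omega$---one simply \emph{chooses} $\omega$ to be the N\'eron differential on the minimal model---but rather to ensure that the identification $\mathcal{E}(O_K)=E(K)$ holds and that all components of the special fibre are defined over $k$, so that the index $(E(K):E_0(K))$ equals the full order of the component group.
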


\begin{proof}
First, one uses that the quotient $E(K)/E_0(K)$ is a finite abelian group, cf.\ \cite[Corollary 9.2(d)]{Silverman1994}. Then
\'A.\ Weil \cite[Theorem 2.2.5]{WeilAAG} shows that 
\[
\mu_E(E_0(K))=\frac{\absolute{\overline{E}_0(k)}}{\absolute{q}}\,,
\]
which now implies the assertion.
\end{proof}

\begin{remark}
We are not making a claim of originality for the statement of Theorem \ref{measureEllipticCurve}. It is simply needed as a reference for the case of an elliptic curve with a certain type of bad reduction below. And it was observed that this general result follows immediately from the theory of N\'eron models.
\end{remark}

The following special case will now be made explicit: namely, the case of a Tate elliptic curve $E$ over $K$.
An introduction to the theory of Tate elliptic curves can be found e.g.\ in \cite{Silverman1994,FP2004}.
A slightly different approach can be found in the book \cite{FP1981} which, unfortunately, is out of print.
\newline

The $K$-rational points $E_q(K)$ of the Tate elliptic curve 
\[
E_q=\mathds{C}_p/\tilde{q}^{\,\mathds{Z}}
\]
with $\tilde{q}\in K$ such that $0<\absolute{\tilde{q}}<1$ and $\mathds{C}_p$ the field of complex $p$-adic numbers form a $p$-adic analytic $1$-manifold. Diffusion operators  induced by theta functions on $E_{\tilde{q}}(K)$ are introduced in \cite{brad_thetaDiffusionTateCurve}.
There, it was used that any holomorphic $1$-form 
 on $E_{\tilde{q}}$  is nowhere vanishing by Riemann-Roch, and
the measure $\absolute{\omega}$ 
coming from the holomorphic differential
\[
\omega=\frac{dx}{x+y}\,,
\]
where $x,y$ are local coordinates on $E_q$,
is locally of the form 
\[
\absolute{\omega}=\frac{\absolute{dx}}{\absolute{x}}\,,
\]
and is invariant under the action of the abelian group $E_{\tilde{q}}(K)$ on itself,
cf.\ for example \cite[Lemma 2.4]{brad_thetaDiffusionTateCurve}.
The approach here allows to hear its Serre invariant:

\begin{cor}
The wavelet eigenvalue $\lambda_\psi$ associated with the operator $\Delta^s$ on $E(K)$ for a Tate elliptic curve $E$ defined over $K$ satisfies
\[
\lambda_\psi\equiv i(E(K))\equiv 0\mod q-1\,,
\]
where, by definition, $K$ is assumed a sufficently large $p$-adic number field.
\end{cor}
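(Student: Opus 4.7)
The plan is to combine Theorem \ref{SerreInvariant} with the structural formula of Proposition \ref{measureEllipticCurve}, specialised to the case of split multiplicative reduction, which is the reduction type exhibited by every Tate elliptic curve $E_{\tilde{q}}$ over $K$. By Theorem \ref{SerreInvariant}, we already have $\lambda_\psi \equiv i(E_{\tilde{q}}(K)) \bmod q-1$, so the entire content of the corollary is the vanishing of the Serre invariant. Thus it suffices to exhibit the total measure $\mu_\omega(E_{\tilde{q}}(K))$ as a fraction $a/q^b$ whose numerator $a$ is divisible by $q-1$, for then Theorem \ref{Serre1} forces $i(E_{\tilde{q}}(K)) \equiv a \equiv 0 \bmod q-1$.

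The next step is to compute $\mu_\omega(E_{\tilde{q}}(K))$ from Proposition \ref{measureEllipticCurve}. For a Tate curve, the Néron model has Kodaira type $I_n$ with $n = v(\tilde{q})$: its special fibre is a cycle of $n$ projective lines meeting transversally, and the smooth part of the identity component $\overline{E}_0$ is isomorphic to the multiplicative group $\mathbb{G}_{m,k}$ (this is precisely the \emph{split multiplicative} reduction). Consequently
\[
\absolute{\overline{E}_0(k)} = q-1,
\]
while the component group $E(K)/E_0(K)$ is cyclic of order $n = v(\tilde{q})$, so $(E(K):E_0(K)) = v(\tilde{q})$.

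Plugging these two pieces into Proposition \ref{measureEllipticCurve} yields
\[
\mu_\omega(E_{\tilde{q}}(K)) = v(\tilde{q})\cdot\frac{q-1}{q},
\]
so in the notation of Theorem \ref{Serre1} we may take $a = v(\tilde{q})(q-1)$ and $b = 1$. Since $(q-1)\mid a$, Theorem \ref{Serre1} gives $i(E_{\tilde{q}}(K)) \equiv 0 \bmod q-1$, and Theorem \ref{SerreInvariant} then propagates this congruence to $\lambda_\psi$, which finishes the argument.

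The only nontrivial ingredient is the identification of $\overline{E}_0$ and of the component group for a Tate curve. This is completely standard once one appeals to the Tate uniformisation $E_{\tilde{q}}(K) \cong K^\times/\tilde{q}^{\,\mathds{Z}}$, which places $E_0(K)$ in bijection with $O_K^\times$ and realises the reduction map on $E_0(K)$ as the surjection $O_K^\times \twoheadrightarrow k^\times$; the main obstacle is therefore just invoking (rather than reproving) the well known fact that a Tate curve has Kodaira type $I_{v(\tilde{q})}$ with $\overline{E}_0 \cong \mathbb{G}_{m,k}$. Once this is cited, the arithmetic is immediate.
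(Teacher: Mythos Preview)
Your proof is correct and follows essentially the same route as the paper: invoke Theorem~\ref{SerreInvariant} for the congruence $\lambda_\psi\equiv i(E(K))$, then use Proposition~\ref{measureEllipticCurve} together with the fact that the identity component of the special fibre of a Tate curve has $q-1$ smooth $k$-points to force $i(E(K))\equiv 0$. You phrase this via $\overline{E}_0\cong\mathbb{G}_{m,k}$ while the paper phrases it as ``projective lines with $q-1$ non-singular points each,'' but these are the same observation; your additional computation of the component group order $v(\tilde q)$ is not actually needed for the congruence (any value of the index leaves the factor $q-1$ intact), though it does recover the explicit formula the paper records in the subsequent remark.
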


\begin{proof}
This is an immediate consequence of
Theorem \ref{SerreInvariant} and Proposition \ref{measureEllipticCurve}, once one realises that in the case of a Tate curve, the connected components of the special fibre $\mathcal{E}$ are projective lines over $k$, and these have each $q-1$ non-singular points inside $\overline{E}(k)$.
\end{proof}

\begin{remark}
Let us remark that the measure of the set of $K$-rational points of a Tate elliptic curve $E$ over $K$ can be given explicitly as
\[
\mu_E(E(K))=\frac{m(q-1)}{q}\,,
\]
where $\tilde{q}=q^mu$ with $u\in O_K^\times$, and $m\ge 1$ is the number of connected components of the special fibre of the N\'eron model $\mathcal{E}$ of $E$, which in this case  is an $m$-gon of projective lines over $k$.
\end{remark}

\section*{Acknowledgements}

Evgeny Zelenov, David Weisbart and Wilson Z\'u\~niga-Galindo are warmly thanked for fruitful discussions. 
Frank Herrlich and Stefan K\"uhnlein are thanked for a lot of helpful thoughts about the theory of elliptic curves many years ago.
This work is partially supported by the Deutsche Forschungsgemeinschaft under project number 469999674.

\bibliographystyle{plain}
\bibliography{biblio}

\end{document}